\newtheorem{priteo}{Theorem}
\newtheorem{lema}{Lemma}
\newtheorem{defi}{Definition}
\newtheorem{prop}{Proposition}
\begin{document}

\title[Heteroclinic Cycles in ODEs with the Symmetry of the Quaternionic $\mathbf{Q}_8$ Group]
{Heteroclinic Cycles in ODEs with the Symmetry of the Quaternion $\mathbf{Q}_8$ Group}

\author{Adrian C. Murza}
\address{Adrian C. Murza, Institute of Mathematics ``Simion Stoilow''
of the Romanian Academy, Calea Grivi\c tei 21, 010702 Bucharest, Romania}
\email{adrian\_murza@hotmail.com}

\begin{abstract}
In this paper we analyze the heteroclinic cycle and the Hopf bifurcation of a generic dynamical system with the symmetry of the group $\mathbf{Q}_8,$ constructed via a Cayley graph. While the Hopf bifurcation is similar to that of a $\mathbf{D}_8$--equivariant system, our main result comes from analyzing the system under weak coupling. We identify the conditions for heteroclinic cycle between three equilibria in the three--dimensional fixed point subspace of a certain isotropy subgroup of $\mathbf{Q}_8\times\mathbf{S}^1.$ We also analyze the stability of the heteroclinic cycle.
\end{abstract}

\keywords{equivariant dynamical system, Cayley graph, $\mathbf{Q}_8$ quaternions, heteroclinic cycle}

\subjclass[2000]{37C80, 37G40, 34C15, 34D06, 34C15}

\maketitle

\section{Introduction}

Heteroclinic cycles with in systems with symmetry have been widely studied over the large decades \cite{Stork, G1, murza, podvi4, podvi2, Stork1}. During the last couple of years a special interest has received the existence of heteroclinic cycles in systems related with quaternionic symmetry, see for example the works of X. Zhang \cite{Z} and O. Podvigina \cite{podvi4, podvi2}. This is basically due to two facts. On the one hand quaternions are involved in the study of heteroclinic cycles in ODEs with symmetry in a natural way, owing to the easy representation of the dynamics in $\mathbb{R}^4$ in terms of quaternions. Many of the dynamical systems giving rise to heteroclinic cycles studied so far are $\mathbf{D}_n$--equivariant; the action of $\mathbf{D}_n$ in $\mathbb{R}^2$ is absolutely irreducible, so $\mathbb{R}^4$ is $\mathbf{D}_n$--simple. Therefore, quaternionic representations in $\mathbb{R}^4$ turned out to be very useful. On the other hand there is the intrinsic interest in the differential equations where the variables are the quaternions. We relate the study of heteroclinic cycles with the dynamics of networks of $n$ coupled oscillators with symmetry. Ashwin and Swift \cite{Ashwin_Swift} showed that the symmetry group of the network can be considered a subgroup of $\mathbb{S}_n$, as long as the oscillators taken individually have no internal symmetries. Besides these two main reasons, there are also other ones that stimulates the analysis of dynamical systems with the quaternionic symmetry, and these are related to applications to other sciences. For example we can cite the heteroclinic phenomena observed in systems with quaternionic symmetry such as nematic liquid crystals \cite{cop}, particle physics \cite{dev} and improving computational efficiency \cite{fun}; however, these heteroclinic behaviors in such systems have never been encountered a theoretical explanation. This is one of our major motivation, together with the intrinsic value of the mathematical theory developed around this subject.

An important step in designing oscillatory networks with the symmetry of a specific group has been developed by Stork \cite{Stork1}.
The authors have shown how to construct an oscillatory network with certain designed symmetry, by the Cayley graph of the symmetry group.

In this paper we analyze the heteroclinic cycles and Hopf bifurcation in ODEs with the symmetry of the quaternionic group $\mathbf{Q}_8$ of order $16.$
We use the methodology developed by Ashwin and Stork \cite{Stork} to construct a network of differential systems with $\mathbf{Q}_8$ symmetry. We investigate the dynamical behavior of the system under the weak coupling. In this case we reduce the asymptotic dynamics to a flow on an sixteen-dimensional torus $\mathbb{T}^{16}.$ We prove the existence of heteroclinic cycles between the three steady--states existing within a three--dimensional fixed--point subspace of one of the isotropy subgroups of $\mathbf{Q}_8\times\mathbf{S}^1,$ namely $\mathbf{Z}_2.$ We also classify the stability of heteroclinic cycles.

The paper is organized as follows. In Section \ref{section Cayley} we construct the most general oscillatory system with the $\mathbf{Q}_8$ symmetry by using the Cayley graph of this group. In Section \ref{section Hopf bifurcation} we analyze the Hopf bifurcation of the constructed array. In Section \ref{section Weak Coupling} we prove the existence of heteroclinic cycles in some of the subspaces which are invariant under the action of certain isotropy subgroups of $\mathbf{Q}_8.$ We also analyze their stability.

\section{The Cayley graph of the $\mathbf{Q}_8$ group}\label{section Cayley}
In this section we construct an oscillatory system with the $\mathbf{Q}_8$ symmetry and describe the elements of this group, as the relationships between them. For more details about the use of the Cayley graph in constructing the network with the prescribed symmetry see \cite{Stork1} or \cite{murza}. The Cayley graphs for $\mathbf{Q}_8$ is shown in Figure \eqref{second_figure}.

\begin{figure}[ht]
\centering
\begin{center}
\includegraphics[scale=0.25]{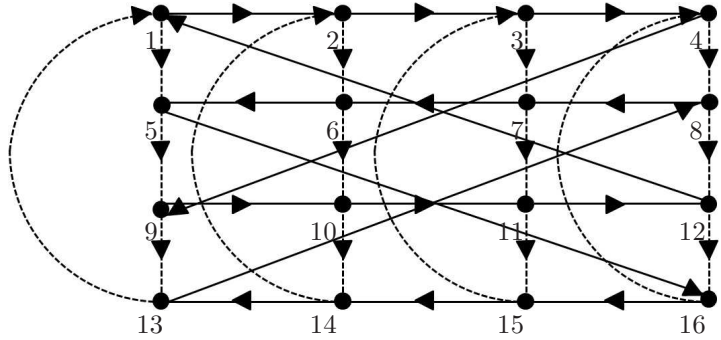}
\caption{A Cayley graph of the $\mathbf{Q}_8$ group. Solid arrows represent left-multiplication with $a,$ dot-and-dashed arrows left multiplication with $b,$ the two generators of this group.}\label{second_figure}
\begin{picture}(0,0)\vspace{0.5cm}
\put(-82,158){\small{$1$}}
\put(-14,158){\small{$2$}}
\put(55,158){\small{$3$}}
\put(122,158){\small{$4$}}
\put(-82,124){\small{$5$}}
\put(-14,124){\small{$6$}}
\put(55,124){\small{$7$}}
\put(122,124){\small{$8$}}
\put(-82,86){\small{$9$}}
\put(-20,86){\small{$10$}}
\put(50,86){\small{$11$}}
\put(118,86){\small{$12$}}
\put(-85,51){\small{$13$}}
\put(-20,51){\small{$14$}}
\put(50,51){\small{$15$}}
\put(118,51){\small{$16$}}
\end{picture}\vspace{-0.3cm}
\end{center}
\end{figure}

The action of the group $\mathbf{Q}_8$ on the cells can be written as
\begin{equation}\label{elements}
\begin{array}{l}
Id\\
a=(1~2~3~4~9~10~11~12)(5~16~15~14~13~8~7~6)\\
b=(1~5~9~13)(2~6~10~14)(3~7~11~15)(4~8~12~16)\\
ab=(1~16~9~8)(2~5~10~13)(3~6~11~14)(4~7~12~15)\\
b^2=(1~9)(2~10)(2~11)(4~12)(5~13)(6~14)(7~15)(8~16)\\
a^2=(1~3~9~11)(2~4~10~12)(5~15~13~7)(6~16~14~8)\\
a^3=(1~4~11~2~9~12~3~10)(5~14~7~16~13~6~15~8)\\
ab^2=(1~10~3~12~9~2~11~4)(5~8~15~6~13~16~7~4)\\
a^2b^2=(1~11~9~3)(2~12~10~4)(5~7~13~15)(6~8~14~16)\\
a^3b^2=(1~12~11~10~9~4~3~2)(5~6~7~8~13~14~15~16)\\
ba=(1~6~9~14)(2~7~10~15)(3~8~11~16)(4~13~12~15)\\
ba^2=(1~7~9~15)(2~8~10~16)(3~13~11~5)(4~14~12~16)\\
b^3=(1~13~9~5)(2~14)(3~15~11~7)(4~16~12~8)(6~10)\\
ab^3=(1~8~9~16)(2~13~10~5)(3~4~15~14)(6~11~12~7)\\
a^3b=(1~14~9~6)(2~15~10~7)(3~16~11~8)(4~5~12~13)\\
a^2b=(1~15~9~7)(2~16~10~8)(3~5~11~13)(4~6~12~14)
\end{array}
\end{equation}
with the relationship between them
\begin{equation}\label{commutators}
\begin{array}{l}
a^8=Id,~~~a^4=b^2=abab,~~~aba=b
\end{array}
\end{equation}

If we assign coupling $g$ between cells related by $a$ and coupling $h$ between cells related by $b$, from the permutations in \eqref{elements}, we can build the following pairwise system in   with the $\mathbf{Q}_8$ symmetry.

\begin{equation}\label{array 4 eq}
\begin{array}{l}
\dot{x}_1=f(x_1)+g(x_{12},x_1)+h(x_5,x_9),\hspace{1.47cm}\dot{x}_2=f(x_2)+g(x_1,x_2)+h(x_9,x_{13}),\\
\dot{x}_3=f(x_3)+g(x_2,x_3)+h(x_{13},x_1),\hspace{1.47cm}\dot{x}_4=f(x_4)+g(x_3,x_4)+h(x_1,x_5),\\
\dot{x}_5=f(x_5)+g(x_4,x_9)+h(x_2,x_6),\hspace{1.62cm}\dot{x}_6=f(x_6)+g(x_9,x_{10})+h(x_6,x_{10}),\\
\dot{x}_7=f(x_7)+g(x_{10},x_{11})+h(x_{10},x_{14}),\hspace{1.02cm}\dot{x}_8=f(x_8)+g(x_{11},x_{12})+h(x_{14},x_2),\\
\dot{x}_9=f(x_9)+g(x_6,x_5)+h(x_3,x_7),\hspace{1.61cm}\dot{x}_{10}=f(x_{10})+g(x_5,x_{16})+h(x_7,x_{11}),\\
\dot{x}_{11}=f(x_{11})+g(x_{16},x_{15})+h(x_{11},x_{15}),\hspace{0.7cm}\dot{x}_{12}=f(x_{12})+g(x_{15},x_{14})+h(x_{15},x_3),\\
\dot{x}_{13}=f(x_{13})+g(x_{14},x_{13})+h(x_4,x_8),\hspace{1.01cm}\dot{x}_{14}=f(x_{14})+g(x_{13},x_8)+h(x_8,x_{12}),\\
\dot{x}_{15}=f(x_{15})+g(x_8,x_7)+h(x_{12},x_{16}),\hspace{1.01cm}\dot{x}_{16}=f(x_{16})+g(x_7,x_6)+h(x_{16},x_4),
\end{array}
\end{equation}
where $f:\mathbb{R}\rightarrow\mathbb{R}$ and $g,~h:\mathbb{R}^2\rightarrow\mathbb{R}$. As shown by Ashwin and Stork \cite{Stork} we can think of $f,~g,~h$ as being generic functions that assure that the isotropy of this vector field under the action of $\mathbf{O}_{16}$ is generically  $\mathbf{Q}_8$.

\section{Hopf bifurcation}\label{section Hopf bifurcation}
In order to consider generic one-parameter Hopf bifurcation in systems with $\mathbf{Q}_8,$ we need to analyze the complex irreducible representations of $\mathbf{Q}_8.$ Based on the work of Golubitsky and Stewart \cite{GS88}, these representations are of one or two dimensions. From their theory, we have that the linear representation of a group $\Gamma$
$$\alpha_{\Gamma}:\Gamma\times W\rightarrow W$$
on the complex vector space $W$ is irreducible if and only if $\Gamma-$invariant subspaces are trivial; it is to say, $\{0\}$ or $W$ itself. It is important to notice, that (a) there need be no faithful irreducible representations, and (b) this is typical.In addition, the amount by which the representation fails to be faithful is the kernel of the action $\alpha_{\Gamma}.$\\

The group $\mathbf{Q}_8$ has five irreducible representations; four of them are one-dimensional and the remaining one is two-dimensional. The one-dimensional representations can be interpreted as Hopf bifurcation with trivial or $\mathbf{Z}_2$ symmetry, which correspond to a quotient group of $\mathbf{Q}_8.$\\

From \cite{J} the two generators of $\mathbf{Q}_8$ are
\begin{equation}\label{ab}
a=
\left(
\begin{array}{cc}
\omega&0\\
0&\bar{\omega}
\end{array}
\right),~~~b=
\left(
\begin{array}{cc}
0&-1\\
1&0
\end{array}
\right),
\end{equation}
where $\omega=\displaystyle{\exp\left(\frac{\pi i}{4}\right)}.$

Therefore the standard irreducible action of $\mathbf{Q}_8$ on $\mathbb{C}^2$ is given by
\begin{equation}\label{actions1}
\begin{array}{l}
a(z_+,z_-)=\displaystyle{\left(  \frac{\sqrt{2}}{2}(1+i)z_+,\frac{\sqrt{2}}{2}(1-i)z_-\right)}\\
\\
b(z_+,z_-)=\displaystyle{\left(  -z_-,z_+\right)}\\
\end{array}
\end{equation}
and there is a phase shift action of $\mathbf{S}^1$ given by
$$R_{\phi}(z_+,z_-)=(e^{i\phi}z_+,e^{i\phi}z_-),$$
for $\phi\in\mathbf{S}^1.$ The action of $\mathbf{Q}_8\times\mathbf{S}^1$ is similar to the action of $\mathbf{D}_8\times\mathbf{S}^1.$ This action is generated by
\begin{equation}\label{actions2}
\begin{array}{l}
\kappa(z_+,z_-)=(z_-,z_+),~\qquad
\rho(z_+,z_-)=(iz_+,-iz_-),
\end{array}
\end{equation}
where $\rho^4=\kappa^2=1$ and $\rho\kappa=\kappa\rho^3.$ In our case of the group $\mathbf{Q}_8$ we have $a^8=b^4=1,a^4=b^2$ and $aba=b.$ The kernel of this action in the $2-$cycle in $\mathbf{D}_8\times\mathbf{S}^1$ is generated by $(\rho^2,\pi)$, while the kernel of the action of $\mathbf{Q}_8\times\mathbf{S}^1$ is generated by $(a^4=b^2=1,\pi)$. Therefore, it is possible to check that
$$\mathbf{Q}_8\times\mathbf{S}^1/ker\alpha_{\mathbf{Q}_8\times\mathbf{S}^1}\equiv\mathbf{Q}_8\times\mathbf{S}^1/
ker\alpha_{\mathbf{D}_8\times\mathbf{S}^1}.$$
This means that we use the results obtained in \cite{GS88} for Hopf bifurcation in systems with $\mathbf{D}_8$ symmetry, with a re-interpretation of the branches.

\begin{table}
\centering
\begin{center}
\caption{Table that relates the isotropies of points in $\mathbb{C}^2$ for identical actions of $\mathbf{Q}_8\times\mathbf{S}^1$ and $\mathbf{D}_8\times\mathbf{S}^1$.}\label{auctions}
\end{center}
\begin{tabular}{c|c|c|c|c}
\toprule
Isotropy in $\mathbf{Q}_8\times\mathbf{S}^1$ & Isotropy in $\mathbf{D}_8\times\mathbf{S}^1$& Fix&$dim_{\mathbb{C}}Fix$&Name ($\mathbf{D}_8$) \\
\midrule
\centering
$\mathbf{Q}_8\times\mathbf{S}^1$ &$\mathbf{D}_8\times\mathbf{S}^1$&$(0,0)$&$0$&Trivial solution\\
$\tilde{\mathbf{Z}}_8^a$&$\tilde{\mathbf{Z}}_8(\rho)$&$(z,0)$&1&Rotating Wave\\
$\tilde{\mathbf{Z}}_8^b$&$\tilde{\mathbf{Z}}_2(\rho^2)\times\tilde{\mathbf{Z}}_2(\kappa)$&$(z,z)$&$1$&Edge Solution\\
$\tilde{\mathbf{Z}}_8^c$&$\tilde{\mathbf{Z}}_2(\rho^2)\times\tilde{\mathbf{Z}}_2(\rho\kappa)$&$(z,iz)$&$1$&Vertex Oscillation\\
$\tilde{\mathbf{Z}}_2$&$\tilde{\mathbf{Z}}_2(\rho^2)$&$(w,z)$&$2$&Submaximal\\
\bottomrule
\end{tabular}
\end{table}

\vspace{0.3cm}
\begin{prop}
There are exactly three branches of periodic solutions that bifurcate from $(0,0),$ corresponding to the isotropy subgroups $\mathbf{D}_8\times\mathbf{S}^1$ with two-dimensional fixed-point subspaces.
\end{prop}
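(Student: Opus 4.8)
The proposition is an application of the Equivariant Hopf Theorem (Golubitsky and Stewart \cite{GS88}). The plan is to verify the hypotheses of that theorem for the action of $\mathbf{Q}_8\times\mathbf{S}^1$ on $\mathbb{C}^2$ and then to read off, from Table \ref{auctions}, precisely those isotropy subgroups whose fixed-point subspace is two-real-dimensional.

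First I would confirm that the center eigenspace at the bifurcation carries a $\mathbf{Q}_8$-simple representation. The action \eqref{actions1} is the unique two-dimensional irreducible representation of $\mathbf{Q}_8$, and together with the phase-shift circle action $R_\phi$ it equips the four-real-dimensional eigenspace with exactly the $\Gamma\times\mathbf{S}^1$ structure required by the theorem. Assuming the generic transversality hypothesis, namely that the pair of purely imaginary eigenvalues of the linearization crosses the imaginary axis with nonzero speed as the bifurcation parameter varies, all the hypotheses of the Equivariant Hopf Theorem are satisfied.

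Next I would use the identification established above, namely $\mathbf{Q}_8\times\mathbf{S}^1/\ker\alpha_{\mathbf{Q}_8\times\mathbf{S}^1}\equiv\mathbf{Q}_8\times\mathbf{S}^1/\ker\alpha_{\mathbf{D}_8\times\mathbf{S}^1}$, so that the isotropy lattice of the $\mathbf{Q}_8\times\mathbf{S}^1$ action coincides with the known lattice for $\mathbf{D}_8\times\mathbf{S}^1$. This transfers the $\mathbf{D}_8$ classification and reduces the problem to inspecting Table \ref{auctions}. The decisive step is to select the isotropy subgroups $\Sigma$ with $\dim_{\mathbb{C}}\mathrm{Fix}(\Sigma)=1$, equivalently $\dim_{\mathbb{R}}\mathrm{Fix}(\Sigma)=2$: these are exactly $\tilde{\mathbf{Z}}_8^a$ with fixed space $(z,0)$, $\tilde{\mathbf{Z}}_8^b$ with $(z,z)$, and $\tilde{\mathbf{Z}}_8^c$ with $(z,iz)$. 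The trivial isotropy $\mathbf{Q}_8\times\mathbf{S}^1$ fixes only the origin and the submaximal $\tilde{\mathbf{Z}}_2$ has a four-real-dimensional fixed space, so neither contributes. For each of the three qualifying subgroups the theorem produces a unique branch of small-amplitude periodic solutions whose symmetry group is that subgroup, giving exactly three branches.

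The main obstacle I anticipate lies not in the bifurcation machinery but in justifying the completeness and the dimensions of the isotropy lattice: one must check that $(z,0)$, $(z,z)$ and $(z,iz)$ exhaust the one-complex-dimensional fixed-point subspaces and that each is the fixed space of a genuine isotropy subgroup rather than of a strictly larger one. This is precisely what the equivalence with $\mathbf{D}_8\times\mathbf{S}^1$ secures, since the analogous computation for $\mathbf{D}_n$-equivariant Hopf bifurcation is carried out in \cite{GS88}; once that transfer is made rigorous, the count of exactly three branches is immediate.
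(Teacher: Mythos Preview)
Your argument is correct and follows essentially the same route as the paper: reduce to the $\mathbf{D}_8$ action via the kernel identification, then invoke known theory for $\mathbf{D}_n$-equivariant Hopf bifurcation. The only difference is the reference you lean on: the paper dispatches the count by citing Theorem~4.2 of Dias--Paiva \cite{paiva}, which states directly that generic $\mathbf{D}_n$ Hopf bifurcation yields exactly three branches, whereas you appeal to the general Equivariant Hopf Theorem from \cite{GS88} and do the lattice inspection by hand from Table~\ref{auctions}. Your version is more self-contained, the paper's is shorter; the substance is the same.
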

\begin{proof}
The proof is a direct application to the group $\mathbf{D}_8$ of Theorem $4.2$ of \cite{paiva}. Therefore, there are exactly three branches of periodic solutions occurring generically in Hopf bifurcation with $\mathbf{D}_8$ symmetry.
\end{proof}

From Proposition $2.1,$ page $372$ in \cite{GS88} we have that every smooth $\mathbf{D}_8\times\mathbf{S}^1-$equivariant map germ $g:\mathbb{C}^2\rightarrow\mathbb{C}^2$ has the form
\begin{equation}\label{GS1}
\begin{array}{l}
g(z_1,z_2)=A\begin{bmatrix}z_1\\z_2\end{bmatrix}+B\begin{bmatrix}z_1^2\bar{z}_1\\z_2^2\bar{z}_2\end{bmatrix}+
C\begin{bmatrix}\bar{z}_1^3z_2^4\\z_1^4\bar{z}_2^3\end{bmatrix}+D\begin{bmatrix}z_1^5\bar{z}_2^4\\ \bar{z}_1^4z_2^5\end{bmatrix},
\end{array}
\end{equation}
where A, B, C, D are complex-valued $\mathbf{D}_8\times\mathbf{S}^1-$invariant functions.
The branching equations for $\mathbf{D}_8-$equivariant Hopf bifurcation may be rewritten $g(z_1,z_2)=0.$
These branching equations are shown in Table \eqref{branches}.
\begin{table}
\centering
\begin{center}
\caption{Branching Equations for $\mathbf{D}_8$ Hopf Bifurcation.}\label{branches}
\end{center}
\begin{tabular}{c|c|c}
\toprule
Orbit type& Branching Equations& Signs of Eigenvalues \\
\midrule
\centering
$(0,0)$&-&$\mathrm{Re}A$(0,$\lambda)$\\
$(a,0)$&$A+Ba^2=0$&$\mathrm{Re}(A_N+B)+O(a)$\\
&&$-\mathrm{Re}(B)$ [twice]\\
$(a,a)$&$A+Ba^2+Ca^6+Da^8=0$&$\mathrm{Re}(2A_N+B)+O(a)$\\
&&$
\left\{
\begin{array}{l}
\mathrm{trace}=\mathrm{Re}(B)+O(a)\\
\mathrm{det}=-\mathrm{Re}(B\bar{C})+O(a)\\
\end{array}
\right.$\\
$(a,e^{\pi i/4}a)$&$A+Ba^2-Ca^6-Da^8=0$&$\mathrm{Re}(2A_N+B)+O(a)$\\
&&$
\left\{
\begin{array}{l}
\mathrm{trace}=\mathrm{Re}(B)+O(a)\\
\mathrm{det}=\mathrm{Re}(B\bar{C})+O(a)\\
\end{array}
\right.$\\
\bottomrule
\end{tabular}
\end{table}
\subsection{Bifurcating branches}
We now use the information in Table \eqref{branches} to derive the bifurcation diagrams describing the generic $\mathbf{D}_8-$equivariant Hopf bifurcation. Assume
\small
\begin{equation}\label{brancing2}
\begin{array}{l}
(a)~~\mathrm{Re}(A_N+B)\neq0,~~(b)~~\mathrm{Re}(B)\neq0,~~(c)~~\mathrm{Re}(2A_N+B)\neq0,~~(d)~~\mathrm{Re}(B\bar{C})\neq0,~~
(c)~~\mathrm{Re}(A_{\lambda})\neq0,
\end{array}
\end{equation}
\normalsize
where each term is evaluated at the origin.\\
Assuming nondegeneracy conditions \eqref{brancing2} and the trivial branch is stable subcritically and loses stability as bifurcation parameter $\lambda$ passes through $0.$ We summarize these facts into the next theorem.
\begin{priteo}\label{teor stability}
The following statements hold.
\begin{itemize}
\item [(a)] The $\tilde{\mathbf{Z}}_8$ branch is super- or subcritical according to whether $\mathrm{Re}(A_N(0)+B(0))$ is positive or negative. It is stable if $\mathrm{Re}(A_N(0)+B(0))>0,~\mathrm{Re}(B(0))<0.$
\item [(b)] The $\mathbf{Z}_2(\kappa)[\oplus\mathbf{Z}_2^c]$ is super- or subcritical according to wether $\mathrm{Re}(2A_N(0)+B(0))$ is positive or negative. It is stable if $\mathrm{Re}(2A_N(0)+B(0))>0,~\mathrm{Re}(B(0))>0$ and $\mathrm{Re}(2B(0)\bar{C}(0))<0.$
\item [(c)] The $\mathbf{Z}_2(\kappa,\pi)[\oplus\mathbf{Z}_2^c]$ or $\mathbf{Z}_2(\kappa,\xi)\oplus\mathbf{Z}_2^c$ branch is super- or subcritical according to wether $\mathrm{Re}(2A_N(0)+B(0))$ is positive or negative. It is stable if $\mathrm{Re}(2A_N(0)+B(0))>0,~\mathrm{Re}(B(0))>0$ and $\mathrm{Re}(2B(0)\bar{C}(0))<0.$
\end{itemize}
\end{priteo}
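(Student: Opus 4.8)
The plan is to treat the three branches in parts (a), (b), (c) as the three maximal isotropy types with two-dimensional fixed-point subspaces produced by the Equivariant Hopf Theorem (the three branches of the preceding Proposition), and to read off criticality and stability directly from the reduced bifurcation equation $g(z_1,z_2)=0$ with $g$ of the form \eqref{GS1}. Throughout I would expand the invariant coefficients in Taylor series in the generating invariants and in the bifurcation parameter, writing $A=\mathrm{Re}(A_\lambda)\lambda+\mathrm{Re}(A_N)N+\cdots$ near the origin, where $N$ is the quadratic invariant and $A(0,0)$ is purely imaginary because of the Hopf eigenvalue condition. The nondegeneracy hypotheses \eqref{brancing2}, in particular $\mathrm{Re}(A_\lambda)\neq 0$, guarantee that each branching equation in Table \eqref{branches} can be solved for $\lambda$ as a function of the amplitude.

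For criticality, I would substitute this expansion into each branching equation. On $\fix(\tilde{\mathbf{Z}}_8)=\{(z,0)\}$ the equation $A+Ba^2=0$ gives, to leading order, $\mathrm{Re}(A_\lambda)\lambda+\mathrm{Re}(A_N+B)a^2=0$, so that $\lambda$ scales like $a^2$ with coefficient controlled by $\mathrm{Re}(A_N+B)$; its sign decides whether the branch opens into $\lambda>0$ or $\lambda<0$, which is the super/subcritical dichotomy of part (a). The same substitution applied to $A+Ba^2+Ca^6+Da^8=0$ and to $A+Ba^2-Ca^6-Da^8=0$ on the two diagonal subspaces $\{(z,z)\}$ and $\{(z,e^{\pi i/4}z)\}$ produces the coefficient $\mathrm{Re}(2A_N+B)$, the factor $2$ arising because $N=2a^2$ on the diagonal, and this settles the criticality statements in (b) and (c); the higher-order terms $C,D$ do not affect the leading criticality but will reappear in the stability analysis.

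For stability I would linearize the reduced map along each branch and identify the eigenvalues already recorded in the last column of Table \eqref{branches}. On the rotating-wave branch the Jacobian splits into the one-dimensional amplitude (radial) direction, whose eigenvalue is governed by $\mathrm{Re}(A_N+B)$, and a doubly degenerate transverse direction with eigenvalue $-\mathrm{Re}(B)$; imposing asymptotic stability, consistently with the sign convention fixed by the loss of stability of the trivial branch, forces the two sign conditions of part (a). For the two submaximal diagonal branches the transverse block is genuinely two-dimensional, so here I would invoke the Routh--Hurwitz (trace--determinant) criterion: stability requires the amplitude eigenvalue $\mathrm{Re}(2A_N+B)$ to carry the correct sign together with the trace $\mathrm{Re}(B)$ and determinant $\mp\mathrm{Re}(B\bar C)$ conditions, the opposite signs of the $C$-term distinguishing $\{(z,z)\}$ from $\{(z,e^{\pi i/4}z)\}$ and accounting for the $\mathrm{Re}(2B\bar C)$ condition appearing in (b) and (c).

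The main obstacle I anticipate is the bookkeeping of signs and normalizations rather than any conceptual difficulty: one must keep the $\mathbf{S}^1$ phase-shift direction (a neutral zero eigenvalue corresponding to the group orbit) separate from the genuine transverse directions, fix a single orientation convention for $\mathrm{Re}(A_\lambda)$ from the trivial solution, and then propagate it coherently through all three branching equations so that the exchange-of-stability relation between criticality and the amplitude eigenvalue is respected. The delicate point is the two-dimensional transverse block for the diagonal branches, where the sign of $\mathrm{Re}(B\bar C)$ in the determinant, and hence the location of the stability boundary, must be extracted carefully from the quartic and higher terms in \eqref{GS1}; once that is done, everything else reduces to the leading-order expansions described above.
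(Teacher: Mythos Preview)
Your outline is correct and is essentially the computation that underlies the result, but the paper itself does not carry it out: the paper's proof is a single sentence, namely that the statement is the specialization to $n=8$ of Theorem~3.1 on page~382 of Golubitsky--Stewart--Schaeffer \cite{GS88}. In other words, the paper treats the entire criticality and stability analysis as a black box imported from the $\mathbf{D}_n$ Hopf theory, whereas you propose to reproduce that black box by expanding the branching equations of Table~\eqref{branches}, separating the neutral $\mathbf{S}^1$ direction, and applying the trace--determinant test to the transverse block on the diagonal branches.

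Both routes are valid. Yours is self-contained and makes explicit why the factor $2$ appears in $\mathrm{Re}(2A_N+B)$ and why the sign of $\mathrm{Re}(B\bar C)$ flips between the two diagonal branches; the paper's route is shorter but relies entirely on the reader having \cite{GS88} at hand. The one place where your plan requires genuine care, which you correctly flag, is the sign convention tying $\mathrm{Re}(A_\lambda)$ to the loss of stability of the trivial branch; once that orientation is fixed, the stated inequalities in (a)--(c) follow exactly as in the $\mathbf{D}_n$ case and no new idea is needed.
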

\begin{proof}
The proof is a direct application to the case $\mathbf{D}_8$ of the Theorem $3.1$ page $382$ in \cite{GS88}.
\end{proof}

\section{Weak Coupling}\label{section Weak Coupling}

The idea of studying ODEs in the weak coupling limit was introduced by Ashwin and Swift \cite{Ashwin_Swift}. This situation can be uderstood as follows. In the no coupling case there is an attracting $n$--dimensional torus with one angle for every oscillator. The situation is completely different to the Hopf bifurcation. Instead of examining small amplitude oscillations near a Hopf bifurcation point, we make a weak coupling approximation. There is a slow evolution of the phase differences in the weak coupling. Another improvement with respect to the Hopf bifurcation is that while the Hopf bifurcation theory gives local information, the weak coupling case the yields global results on the n--dimensional torus.

System \eqref{array 4 eq} can be rewritten under weak coupling case as an ODE of the form:
\begin{equation}\label{generic weak coupling equation}
\dot{x}_i=f(x_i)+\epsilon g_i(x_1,\ldots,x_{16})
\end{equation}
for $i=1,\ldots,16,~x_i\in \mathcal{V}$ and commuting with the permutation action of $\mathbf{Q}_8$ on $\mathcal{V}^{16},$ both $f$ and $g_i$ being of the class $\mathcal{C}^{\infty}.$ The constant $\epsilon$ represents the coupling strength and we have $\epsilon\ll1$. As in \cite{Ashwin_Swift}, or \cite{Stork} we may assume $\dot{x}=f(x)$ has an hyperbolic stable limit cycle.\\

It follows that if the coupling is weak, we should not just take into account the irreducible representations of $\mathbf{Q}_8.$ Since there are $16$ stable hyperbolic limit cycles in the limit of $\epsilon=0,$ it means that the asymptotic dynamics of the system factors into the asymptotic dynamics of $16$ limit cycles. We assume that each limit cycle taken individually is hyperbolic for small enough values of the coupling parameter. This justifies expressing the dynamics of the system only in terms of phases, i.e. an ODE on $\mathbf{T}^{16}$  which is $\mathbf{Q}_8-$equivariant.

\tiny
\begin{table}
\centering
\begin{center}
\caption{Isotropy subgroups and fixed point subspaces for the $\mathbf{Q}_8\times\mathbf{S}^1$ action on $\mathbf{T}^{16}$.}\label{table grande}
\end{center}
\begin{tabular}{ccccc}
\toprule
$\Sigma$ & $\mathrm{Fix}(\Sigma)$ & Generators & $\mathrm{dim~Fix}(\Sigma)$\\
\midrule
\centering
$\mathbf{Q}_8$ & $(0,0,0,0,0,0,0,0,0,0,0,0,0,0,0,0)$ & $(a,0),(b,0)$ &$0$\\
$\tilde{\mathbf{Q}}_8^a$ & $(0,0,0,0,0,0,0,0,\pi,\pi,\pi,\pi,\pi,\pi,\pi,\pi)$ & $(b,\pi),(ab,\pi)$ &$0$\\
$\tilde{\mathbf{Q}}_8^{b}$ & $(0,\pi,0,\pi,0,\pi,0,\pi,0,\pi,0,\pi,0,\pi,0,\pi)$ & $(a,\pi),(ab,\pi)$ &$0$\\
$\tilde{\mathbf{Q}}_8^{ab}$ & $(0,\pi,0,\pi,0,\pi,0,\pi,\pi,0,\pi,0,\pi,0,\pi,0)$ & $(a,\pi),(b,\pi)$ &$0$\\
$\mathbf{Z}_8^a$ & $(0,0,0,0,0,0,0,0,\phi,\phi,\phi,\phi,\phi,\phi,\phi,\phi)$ & $(a,0)$ &$1$\\
$\mathbf{Z}_8^{b}$ & $(0,\phi,0,\phi,0,\phi,0,\phi,0,\phi,0,\phi,0,\phi,0,\phi)$ & $(b,0)$ &$1$\\
$\mathbf{Z}_8^{ab}$ & $(0,\phi,0,\phi,0,\phi,0,\phi,\phi,0,\phi,0,\phi,0,\phi,0)$ & $(ab,0)$ &$1$\\
$\tilde{\mathbf{Z}}_8^a$ & $(0,\pi,0,\pi,0,\pi,0,\pi,\phi,\phi+\pi,\phi,\phi+\pi,\phi,\phi+\pi,\phi,\phi+\pi)$ & $(a,\pi)$ &1\\
$\tilde{\mathbf{Z}}_8^b$ & $(0,\phi,0,\phi,0,\phi,0,\phi,\phi+\pi,\phi,\phi+\pi,\phi,\phi+\pi,\phi,\phi+\pi,\pi)$ & $(b,\pi)$ &$1$\\
$\tilde{\mathbf{Z}}_8^{ab}$ & $(0,\phi,0,\phi,0,\phi,0,\phi+\pi,\phi,\phi+\pi,\phi,\phi+\pi,\phi,\phi+\pi,\pi,\phi+\pi)$ & $(ab,\pi)$ &1\\
$\tilde{\mathbf{Z}}_8^{a/4}$ & $(0,\frac{7\pi}{4},\frac{3\pi}{2},\frac{5\pi}{4},\pi,\frac{3\pi}{4},\frac{\pi}{2},\frac{\pi}{4},\phi,\phi+\frac{\pi}{4},
\phi+\frac{\pi}{2},\phi+\frac{3\pi}{4},\phi+\pi,\phi+\frac{5\pi}{4},\phi+\frac{3\pi}{2},\phi+\frac{7\pi}{4})$ & $(a,\frac{\pi}{4})$ &$1$\\
$\tilde{\mathbf{Z}}_8^{b/4}$&$(0,\phi,\pi,\phi+\pi,\phi+\frac{7\pi}{4},\frac{7\pi}{4},\phi+\frac{3\pi}{2},\frac{3\pi}{2},\phi+\frac{5\pi}{4},\phi,
\phi+\frac{3\pi}{4},\phi,\phi+\frac{\pi}{2},\phi,\phi+\frac{\pi}{4},\frac{\pi}{4})$& $(b,\frac{\pi}{4})$ &$1$\\
$\tilde{\mathbf{Z}}_8^{b/4}$ &$(0,\phi,\pi,\phi+\pi,\frac{\pi}{4},\phi+\frac{\pi}{4},\frac{\pi}{2},\phi+\frac{\pi}{2},\frac{3\pi}{4},\phi+
\frac{3\pi}{4},\frac{3\pi}{2},\phi+\frac{3\pi}{2},\frac{5\pi}{4},\phi+\frac{5\pi}{4},\frac{7\pi}{4},\phi+\frac{7\pi}{4})$& $(ab,\frac{\pi}{4})$ &$1$\\
$\mathbf{Z}_2$&$(0,\phi_1,0,\phi_1,0,\phi_1,0,\phi_1,\phi_2,\phi_3,\phi_2,\phi_3,\phi_2,\phi_3,\phi_2,\phi_3)$& $(b^2,0)$ &$3$\\
$\tilde{\mathbf{Z}}_2$&$(0,\phi_1,\pi,\phi_1+\pi,0,\phi_1,\pi,\phi_1+\pi,\phi_2,\phi_3,\phi_2+\pi,\phi_3+\pi,\phi_2,\phi_3,\phi_2+\pi,\phi_3+\pi)$& $(b^2,\pi)$ &$3$\\
\bottomrule
\end{tabular}
\end{table}
\normalsize

When considering the weakly coupled system we can average it over the phases \cite{Stork1}. This is the same as introducing and phase shift symmetry by translation along the diagonal;
$$R_{\theta}(\phi_1,\ldots,\phi_{16}):=(\phi_1+\theta,\ldots,\phi_{16}+\theta),$$
for $\theta\in\mathbf{S}^1.$

We obtained an ODE on that is equivariant under the action of $\mathbf{Q}_{8}\times\mathbf{S}^1,$  and we have to classify the isotropy types of points under this action. This is done in Table \eqref{table grande}.
Since now on, our interest focuses in the three-dimensional space $\mathrm{Fix}(\mathbf{Z}_2);$ it does not contain two-dimensional fixed-point subspaces. In turn, it contains several one- and zero-dimensional subspaces fixed by the isotropy subgroups $\tilde{\mathbf{Q}}_8^i,$ $\tilde{\mathbf{Z}}_8^i$ and $\mathbf{Z}_8^i,$ respectively, where $i=\{a,b,ab,a/4,b/4\}$ as in Table \eqref{table grande}. These symmetries are not in $\mathbf{Z}_2;$ however, they are in the normalizer of $\mathbf{Z}_2$.

\subsection{Dynamics of the $\theta_1,\theta_2$ and $\theta_3$ angles in $\mathrm{Fix}(\mathbf{Z}_2)$}\label{onetorus_theor}
We can define coordinates in $\mathrm{Fix}(\mathbf{Z}_2) $ by taking a basis
\begin{equation}\label{basis}
\begin{array}{l}
e_1=-\frac{1}{8}(1,1,1,1,1,1,1,1,-1,-1,-1,-1,-1,-1,-1,-1)\\
e_2=-\frac{1}{8}(1, -1, 1, -1 ,1, -1, 1, -1 ,1, -1, 1, -1, 1, -1, 1, -1)\\
e_3=-\frac{1}{8}(1, -1, 1, -1, 1, -1, 1, -1, -1, 1, -1, 1, -1, 1, -1, 1)
\end{array}
\end{equation}
and consider the space spanned by $\{e_1,e_2,e_3\}$ parametrized by $\{\theta_1,\theta_2,\theta_3\}:~\sum_{n=1}^3\theta_ne_n.$

By using these coordinates, we construct the following family of three-dimensional differential systems which satisfies the symmetry of $\mathrm{Fix}(\mathbf{Z}_2)$.
\begin{equation}\label{systema ejemplo}
\left\{
\begin{array}{l}
\dot{\theta_1}=u\sin{\theta_1}\cos{\theta_2}+\epsilon\sin{2\theta_1}\cos{2\theta_2}\\
\\
\dot{\theta_2}=u\sin{\theta_2}\cos{\theta_3}+\epsilon\sin{2\theta_2}\cos{2\theta_3}\\
\\
\dot{\theta_2}=u\sin{\theta_3}\cos{\theta_1}+\epsilon\sin{2\theta_3}\cos{2\theta_1}
+q(1-\cos\theta_1)\sin2\theta_3,\\
\end{array}
\right.
\end{equation}
where $u,\epsilon,q\in\mathbb{R}.$

We will show that this vector field contains structurally stable, attracting heteroclinic cycles which may be asymptotically stable, essentially asymptotically stable or completely unstable, depending on the values of $u,\epsilon$ and $q.$ We can assume, without loss of genericity that the space $\mathrm{Fix}(\mathbf{Z}_2)$ is normally attracting for the dynamics and therefore the dynamics within the fixed-point space determines the stability of the full system.
In the following we will show that the planes $\theta_i=0~(\mathrm{mod}~\pi),i=1,2,3$ are invariant under the flow of \eqref{systema ejemplo}.

Let $\mathcal{X}$ be the vector field of system \eqref{systema ejemplo}.
\begin{defi}
We call a trigonometric invariant algebraic surface $h(\theta_1,\theta_2,\theta_3)=0,$ if it is invariant by the flow of \eqref{systema ejemplo}, i.e. there exists a function $K(\theta_1,\theta_2,\theta_3)$ such that
\begin{equation}\label{campo}
\mathcal{X}h=\frac{\partial h}{\partial\theta_1}\dot{\theta_1}+\frac{\partial h}{\partial\theta_2}\dot{\theta_2}
+\frac{\partial h}{\partial\theta_3}\dot{\theta_3}=Kh.
\end{equation}
\end{defi}

\begin{lema}
Functions $\sin\theta_1,~\sin\theta_2$ and $\sin\theta_3$ are trigonometric invariant algebraic surfaces for system \eqref{systema ejemplo}.
\end{lema}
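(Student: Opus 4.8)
The plan is to verify the defining relation \eqref{campo} directly for each of the three candidate surfaces by exhibiting an explicit cofactor $K$. Since each $h=\sin\theta_i$ depends on a single coordinate, two of the three partial derivatives occurring in $\mathcal{X}h$ vanish identically, and the Lie derivative collapses to $\mathcal{X}(\sin\theta_i)=\cos\theta_i\,\dot\theta_i$. The whole argument therefore reduces to reading off the structure of the corresponding component of the vector field of \eqref{systema ejemplo}.

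First I would observe that every term appearing in $\dot\theta_i$ carries a factor of $\sin\theta_i$: the terms written with $\sin\theta_i$ do so explicitly, while each term written with $\sin 2\theta_i$ does so through the identity $\sin 2\theta_i=2\sin\theta_i\cos\theta_i$. Consequently each component factors as $\dot\theta_i=\sin\theta_i\,P_i(\theta_1,\theta_2,\theta_3)$ for a smooth trigonometric function $P_i$. Multiplying by $\cos\theta_i$ then gives $\mathcal{X}(\sin\theta_i)=(\cos\theta_i\,P_i)\,\sin\theta_i$, so that $K_i:=\cos\theta_i\,P_i$ is the required cofactor and \eqref{campo} holds with $h=\sin\theta_i$.

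Second I would record the cofactors explicitly to make the statement self-contained. For the first surface one obtains $K_1=u\cos\theta_1\cos\theta_2+2\epsilon\cos^2\theta_1\cos 2\theta_2$, with the analogous expression $K_2=u\cos\theta_2\cos\theta_3+2\epsilon\cos^2\theta_2\cos 2\theta_3$ following by the same reduction. The only point requiring a moment's attention is the extra term $q(1-\cos\theta_1)\sin 2\theta_3$ in the third equation, which again contains the factor $\sin\theta_3$ via $\sin 2\theta_3$ and therefore contributes the summand $2q(1-\cos\theta_1)\cos^2\theta_3$ to $K_3=u\cos\theta_1\cos\theta_3+2\epsilon\cos^2\theta_3\cos 2\theta_1+2q(1-\cos\theta_1)\cos^2\theta_3$.

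There is no genuine obstacle here: once one notices that the vector field was constructed so that its $i$-th component is divisible by $\sin\theta_i$, the verification is a one-line factorization for each of the three functions. The only care needed is the bookkeeping with the double-angle identity and with the inhomogeneous $q$-term, neither of which breaks the divisibility. This lemma is nevertheless the cornerstone for what follows, since it shows that the planes $\theta_i\equiv 0 \pmod{\pi}$ are invariant under the flow of \eqref{systema ejemplo} and thus frame the region in which the heteroclinic cycle will subsequently be located.
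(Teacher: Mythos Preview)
Your proof is correct and follows essentially the same approach as the paper: both arguments rewrite each component $\dot\theta_i$ as $\sin\theta_i$ times a trigonometric factor (using $\sin 2\theta_i=2\sin\theta_i\cos\theta_i$), so that $\mathcal{X}(\sin\theta_i)=\cos\theta_i\,\dot\theta_i=K_i\sin\theta_i$ with the cofactor $K_i$ read off directly. Your version is slightly more explicit---you write out all three cofactors and flag the $q$-term---whereas the paper does only the case $i=1$ and leaves the rest as ``similar,'' but the substance is identical.
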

\begin{proof}
We can write the system \eqref{systema ejemplo} in the form
\begin{equation}\label{systema ejemplo1}
\left\{
\begin{array}{l}
\dot{\theta_1}=\sin{\theta_1}\left(u\cos{\theta_2}+2\epsilon\cos{\theta_1}\cos{2\theta_2}\right)\\
\\
\dot{\theta_2}=\sin{\theta_2}\left(u\cos{\theta_3}+2\epsilon\cos{\theta_2}\cos{2\theta_3}\right)\\
\\
\dot{\theta_3}=\sin{\theta_3}\left(u\cos{\theta_1}+2\epsilon\cos{2\theta_1}\cos{\theta_3}
+2q(1-\cos\theta_1)\cos\theta_3\right)\\
\end{array}
\right.
\end{equation}

Now if we choose $h_1=\sin\theta_1,$ then $\mathcal{X}h_1=\cos{\theta_1}\sin{\theta_1}\left(u\cos{\theta_2}+2\epsilon\cos{\theta_1}\cos{2\theta_2}\right)$
so $K_1=\cos{\theta_1}\left(u\cos{\theta_2}+2\epsilon\cos{\theta_1}\cos{2\theta_2}\right).$ The remaining cases follow similarly.
\end{proof}

Since the planes $\theta_i=0(\mathrm{mod}~\pi)$ are invariant under the flow of \eqref{systema ejemplo}, it is clear that $(0,0,0),$ $(\pi,0,0),$ $(0,\pi,0)$, and $(0,0,\pi)$ are equilibria for \eqref{systema ejemplo}. To check the possibility of heteroclinic cycles in system \eqref{systema ejemplo}, we linearize about the equilibria (i.e. the zero-dimensional fixed points). The idea is proving that there are three-dimensional fixed-point spaces $\mathrm{Fix}(\mathbf{Z}_2)$ and $\mathrm{Fix}(\tilde{\mathbf{Z}}_2)$ which connect these fixed points, allowing the existence of such a heteroclinic network between the equilibria.\\
Let's assume
\begin{equation}\label{param}
\begin{array}{l}
|\epsilon|<\frac{u}{2}\qquad\mathrm{and}\qquad
|\epsilon+2q|<\frac{u}{2}.
\end{array}
\end{equation}

\begin{table}
\centering
\begin{center}
\caption{Eigenvalues of the flow of equation \eqref{systema ejemplo}, at the four non-conjugate zero-dimensional fixed points.}\label{table fixed points}
\end{center}
\begin{tabular}{c|c|c|c|c}
\toprule
$\Sigma$ & $\mathrm{Fix}(\Sigma)$ with coordinates $(\phi_1,\phi_2,\phi_3)$ & $\lambda_1$ & $\lambda_2$& $\lambda_3$\\
\midrule
\centering
$\mathbf{Q}_8$& $(0,0,0)$ & $u+2\epsilon$ &$u+2\epsilon$&$u+2\epsilon$\\
\midrule
$\tilde{\mathbf{Q}}_8^a$ & $(\pi,0,0)$ & $-u+2\epsilon$ &$u+2\epsilon$&$-u+2\epsilon+4q$\\
$\tilde{\mathbf{Q}}_8^b$ & $(0,\pi,0)$ & $-u+2\epsilon$ &$-u+2\epsilon$&$u+2\epsilon$\\
$\tilde{\mathbf{Q}}_8^{ab}$ & $(0,0,\pi)$ & $u+2\epsilon$ &$-u+2\epsilon$&$-u+2\epsilon$\\
\bottomrule
\end{tabular}
\end{table}

We use the criteria of Krupa and Melbourne \cite{Krupa} to study the stability of the heteroclinic cycle.
\begin{priteo}
In the following we will prove that there exists the possibility of a heteroclinic cycle in the following way:
\begin{equation}\label{flechas}
\begin{array}{l}
\cdots\xrightarrow{\mathrm{Fix}(\tilde{\mathbf{Z}}_8^{b})}\mathrm{Fix}(\tilde{\mathbf{Q}}_8^a)\xrightarrow{\mathrm{Fix}(\tilde{\mathbf{Z}}_8^{ab})}
\mathrm{Fix}(\tilde{\mathbf{Q}}_8^b)\xrightarrow{\mathrm{Fix}(\tilde{\mathbf{Z}}_8^a)}
\mathrm{Fix}(\tilde{\mathbf{Q}}_8^{ab})
\xrightarrow{\mathrm{Fix}(\tilde{\mathbf{Z}}_8^{b})}\cdots
\end{array}
\end{equation}
The stability of the heteroclinic cycle is:
\begin{itemize}
\item [(a)] asymptotically stable if
\begin{equation}\label{eq1 teorema estabilidad}
\begin{array}{l}
\displaystyle{u<0~\mathrm{and}~q<\frac{3u}{4}-\frac{\epsilon}{2}},
\end{array}
\end{equation}
\item [(b)] unstable but essentially asymptotically stable if
\begin{equation}\label{eq1 teorema estabilidad repetition}
\begin{array}{l}
\displaystyle{u<0~\mathrm{and}~\frac{3u}{4}-\frac{\epsilon}{2}<q<\frac{u}{2}-\frac{(u+2\epsilon)^3}{(-u+2\epsilon)^2}}.
\end{array}
\end{equation}
\item [(c)] completely unstable if $u>0.$
\end{itemize}
\end{priteo}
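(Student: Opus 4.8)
The plan is to separate the statement into two independent tasks: (i) a soft, geometric construction of the cycle from the invariant-subspace structure already established, and (ii) a quantitative stability analysis carried out through the Krupa--Melbourne \cite{Krupa} transition maps, from which the inequalities (a)--(c) must fall out.

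\textbf{Existence of the cycle.} First I would record that, by the Lemma, each plane $\theta_i\equiv 0\ (\mathrm{mod}\ \pi)$ is flow-invariant, and that the three lines $\mathrm{Fix}(\tilde{\mathbf{Z}}_8^{a})$, $\mathrm{Fix}(\tilde{\mathbf{Z}}_8^{b})$, $\mathrm{Fix}(\tilde{\mathbf{Z}}_8^{ab})$ are one-dimensional fixed-point subspaces of symmetries lying in the normalizer of $\mathbf{Z}_2$; hence each is invariant under the flow restricted to $\mathrm{Fix}(\mathbf{Z}_2)$, and each has two of the equilibria $\mathrm{Fix}(\tilde{\mathbf{Q}}_8^{a})$, $\mathrm{Fix}(\tilde{\mathbf{Q}}_8^{b})$, $\mathrm{Fix}(\tilde{\mathbf{Q}}_8^{ab})$ as its endpoints. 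On such a line the dynamics is a scalar ODE, so the orbit through any interior point limits to one endpoint forwards and the other backwards in time, with the direction fixed by the sign of the tangential eigenvalue read off from Table \eqref{table fixed points}. Checking these signs under the hypotheses shows the three connections join endpoint-to-endpoint into the closed loop \eqref{flechas}. Because all three connections sit inside flow-invariant fixed-point subspaces, they persist under $\mathbf{Q}_8$-equivariant perturbation, so the cycle is structurally stable.

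\textbf{Eigenvalue classification.} For the stability part I would, at each node, sort the three eigenvalues of Table \eqref{table fixed points} into one contracting eigenvalue $-c_j$ (tangent to the incoming connection), one expanding eigenvalue $e_j$ (tangent to the outgoing connection), and one transverse eigenvalue $t_j$. Writing $m=u+2\epsilon$ and $p=-u+2\epsilon$, at $\tilde{\mathbf{Q}}_8^{b}$ and $\tilde{\mathbf{Q}}_8^{ab}$ all three eigenvalues come from $\{p,p,m\}$, whereas the asymmetric node $\tilde{\mathbf{Q}}_8^{a}$ carries the extra dependence through $-u+2\epsilon+4q$; this is the direction along which $q$ tunes the transverse behaviour, so deciding whether $-u+2\epsilon+4q$ plays the expanding or the transverse role there is the decisive step. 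When $u>0$ the contracting and expanding roles interchange relative to $u<0$, and this sign reversal is what ultimately forces case (c).

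\textbf{Transition maps and the thresholds.} Following Krupa and Melbourne, I would build the Poincar\'e return map of the cycle as a composition of local passage maps near the three nodes, obtained from the $c_j,e_j,t_j$, with the global connection maps, and then read the stability of the cycle off the resulting transition matrix. The product $\prod_j c_j/e_j$ governs stability within the cycle's own plane, while the accumulated transverse multiplier, a product of powers $t_j/e_j$ around the loop, governs whether orbits are driven off the cycle or recaptured. The boundary $q=\tfrac{3u}{4}-\tfrac{\epsilon}{2}$ should be where the transverse contribution at $\tilde{\mathbf{Q}}_8^{a}$ changes sign and the cycle stops attracting a full neighbourhood, giving the transition from (a) to (b); the second boundary $q=\tfrac{u}{2}-\tfrac{(u+2\epsilon)^3}{(-u+2\epsilon)^2}$, whose shape $m^3/p^2$ reflects the three contracting and two expanding factors entering the transverse return around the three-node loop, is where the weaker essential-asymptotic-stability (full-measure) criterion finally fails. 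Substituting the explicit $c_j,e_j,t_j$ and simplifying should reproduce (a)--(c).

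\textbf{Main obstacle.} I expect the genuine difficulty to lie not in the existence of the cycle but in the transverse bookkeeping: correctly identifying which eigenvalue at $\tilde{\mathbf{Q}}_8^{a}$ is transverse, and then tracking the transverse coordinate through the full return map carefully enough both to separate \emph{asymptotic} from \emph{essential asymptotic} stability and to produce the two distinct thresholds. Reducing the Krupa--Melbourne matrix conditions to the stated clean inequalities is where the real work sits.
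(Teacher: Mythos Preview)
Your approach is the same as the paper's---both use the Krupa--Melbourne criterion---but the paper is far more direct: it bypasses the general transition-matrix language and simply applies the formula $\rho=\prod_{i=1}^{3}\rho_i$ with $\rho_i=\min\{c_i/e_i,\,1-t_i/e_i\}$, computes each $\rho_i$ from Table~\ref{table fixed points}, and then invokes Theorem~2.4 of \cite{Krupa}. Your elaboration about existence and structural stability is fine (and more careful than the paper, which essentially takes the cycle as given once the invariant lines and eigenvalue signs are recorded).

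There is, however, one concrete slip in your analysis that would derail the derivation of the thresholds. You say the boundary $q=\tfrac{3u}{4}-\tfrac{\epsilon}{2}$ is ``where the transverse contribution at $\tilde{\mathbf{Q}}_8^{a}$ changes sign''. It is not. The transverse eigenvalue there is $-u+2\epsilon+4q$, which vanishes at $q=\tfrac{u}{4}-\tfrac{\epsilon}{2}$. The threshold $q=\tfrac{3u}{4}-\tfrac{\epsilon}{2}$ is instead the value at which the two arguments of the $\min$ in $\rho_1$ cross: setting $c_1/e_1=1-t_1/e_1$, i.e.\ $\dfrac{-u+2\epsilon}{u+2\epsilon}=\dfrac{2u-4q}{u+2\epsilon}$, gives exactly $q=\tfrac{3u}{4}-\tfrac{\epsilon}{2}$. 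So the (a)/(b) boundary is a \emph{min-switching} threshold in the Krupa--Melbourne index, not a sign change of $t_1$ itself. If you carry your ``transverse sign change'' picture through you will get the wrong threshold; once you replace it by the $\min$-formula the computation is short, yields $\rho_2=\rho_3=\dfrac{-u+2\epsilon}{u+2\epsilon}$ and the two-branch expression for $\rho_1$, and the second boundary in (b) falls out from $\rho=1$.
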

\begin{proof}

The stability is expressed by
\begin{equation}\label{stability krupa1}
\rho=\prod_{i=1}^3\rho_i,~~~\mathrm{where}~~~\rho_i=\mathrm{min}\{c_i/e_i,1-t_i/e_i\}.
\end{equation}

In equation \eqref{stability krupa1}, $e_i$ is the expanding eigenvalue at the $i$th point of the cycle, $-c_i$ is the contracting eigenvalue and $t_i$ is the transverse eigenvalue of the linearization.
For the heteroclinic cycle we have
\begin{equation}\label{rho values11}
\rho_1=
\left\{
\begin{array}{l}
\displaystyle{\frac{2u-4q}{u+2\epsilon}}~\mathrm{if}~q<\frac{3u}{4}-\frac{\epsilon}{2},\\
\\
\displaystyle{\frac{-u+2\epsilon}{u+2\epsilon}}~\mathrm{if}~q>\frac{3u}{4}-\frac{\epsilon}{2},
\end{array}
\qquad\qquad\rho_2=\rho_3=\displaystyle{\frac{-u+2\epsilon}{u+2\epsilon}},
\right.
\end{equation}

so from equations \eqref{stability krupa1} and \eqref{rho values11} we obtain
\begin{equation}\label{rho values13}
\rho=
\left\{
\begin{array}{l}
\displaystyle{\frac{(-u+2\epsilon)^2(2u-4q)}{(u+2\epsilon)^3}}~\mathrm{if}~u<0~\mathrm{and}~q<\frac{3u}{4}-\frac{\epsilon}{2},\\
\\
\displaystyle{\frac{(-u+2\epsilon)^3}{(u+2\epsilon)^3}}~\mathrm{if}~u<0~\mathrm{and}~q>\frac{3u}{4}-\frac{\epsilon}{2}.
\end{array}
\right.
\end{equation}
Then the proof follows by applying Theorem $2.4$ in \cite{Krupa}.
\end{proof}

For any $u<0$ we have $\displaystyle{\frac{3u}{4}-\frac{\epsilon}{2}<q<\frac{u}{2}-\frac{(u+2\epsilon)^3}{(-u+2\epsilon)^2}}$ and therefore there exist values of $q$ for which there exist essentially asymptotic stable heteroclinic connections. In consequence, there exists an attracting heteroclinic cycle even though the linear stability of $\mathrm{Fix}(\tilde{\mathbf{Q}}_8^a)$ has an expanding transverse eigenvalue.

\section{Conclusions}
We prove the existence of stable heteroclinic cycles in the most general coupled ordinary differential equations with quaternionic symmetry $\mathbf{Q}_8$. Our approach is generic and offers for the first time as far as we know, evidence of these phenomena in systems with this symmetry. While the results stands on its own from a mathematical point of view, it might also contribute to a better understanding of these intermittent behaviors experimentally observed in nematic liquid crystals \cite{cop} and particle physics \cite{dev}.

\section*{Acknowledgements}
In first place I would like to address many thanks to the Referee, whose helpful indications and comments greatly improved the presentation of the paper. I acknowledge a BITDEFENDER
postdoctoral fellowship from the Institute of Mathematics "Simion
Stoilow" of the Romanian Academy, Contract of Sponsorship No.
262/2016 as well as economical support from a grant of the Romanian
National Authority for Scientific Research and Innovation,
CNCS-UEFISCDI, project number PN-II-RU-TE-2014-4-0657.

\end{document}